\documentclass{article}

\usepackage{amsmath}
\usepackage{amssymb}
\usepackage{graphicx}
\usepackage{bm}
\usepackage{mathdots}
\usepackage{booktabs}
\usepackage{rotating}
\usepackage{listings}
\usepackage[ruled,linesnumbered]{algorithm2e}
\usepackage[a4paper,left=2.8cm,right=2.8cm,top=2.5cm,bottom=2.5cm]{geometry}
\usepackage{fancyhdr}
\usepackage[framemethod=tikz]{mdframed}
\newtheorem{theorem}{Theorem}[section]
\newtheorem{corollary}{Corollary}[section]
\newtheorem{proposition}{Proposition}[section]

\newtheorem{lemma}{Lemma}[section]
\newtheorem{definition}{Definition}[section]

\makeatletter 
\@addtoreset{equation}{section}
\makeatother  

\newtheorem{remark}{Remark}[section]

\newenvironment{proof}{{\noindent\it Proof.}\quad}{\hfill $\square$\\}

\usepackage{url}
\usepackage{mathtools}
\usepackage{lipsum}
\usepackage{tcolorbox}

\newcommand{\eps}{\varepsilon}

\newcommand{\experimentfigure}[2]{%
\IfFileExists{#1}{%
\includegraphics[width=#2]{#1}%
}{%
\fbox{\parbox[c][0.24\textheight][c]{#2}{\centering
Figure file not found. Run \texttt{matlab/run\_all\_experiments.m}.}}%
}%
}

\begin{document}
\title{When do perturbed Chebyshev--Lobatto points remain Chebyshev?
}

\author{ Hao-Ning Wu\footnotemark[2]
       }

\renewcommand{\thefootnote}{\fnsymbol{footnote}}
\footnotetext[2]{Department of Mathematics, University of Georgia, Athens, GA 30602, USA (hnwu@uga.edu)}

\maketitle

\begin{abstract}
Chebyshev points are distinguished in polynomial interpolation by the
logarithmic growth of their Lebesgue constants. This paper asks a simple
question: how much can Chebyshev points be perturbed before they cease to
behave like Chebyshev points? We study perturbed Chebyshev--Lobatto nodes
\(x_j=\cos(j\pi/n+\eps_j)\), with angular perturbations
\(|\eps_j|\le \sigma_n\). The study is motivated by numerical experiments
showing a broad stable region when the mesh fraction \(n\sigma_n\) is small
and rapid amplification for larger perturbations; the observed transition
region is consistent with the curve \(n\sigma_n\asymp(\log n)^{-1}\). The main
result is a deterministic worst-case stability estimate: if
\(n\sigma_n(\log n+1)\) is bounded by a sufficiently small constant, then the
Lebesgue constant remains logarithmic. The proof uses the cosine
parametrization and Bernstein's inequality for trigonometric polynomials,
thereby exploiting the angular geometry of the Chebyshev--Lobatto grid rather
than a Markov inequality in the physical variable. We also give a worst-case
obstruction at the angular mesh scale, showing that perturbations of order
\(1/n\) cannot be allowed uniformly. Consequences are derived for analytic
interpolation in Bernstein ellipses, for the absence of Runge-type divergence
in the stable analytic regime, and for pseudospectral differentiation.
Numerical experiments illustrate the transition in the Lebesgue constants, the
shape of the associated Lebesgue functions, Runge-function interpolants, and
finite-precision differentiation errors.
\end{abstract}

\noindent\textbf{Keywords.}
Chebyshev interpolation; Lebesgue constant; perturbed nodes; angular perturbations; polynomial interpolation.

\noindent\textbf{AMS subject classifications.}
41A05, 41A10, 65D05, 65D32.

\section{Introduction}

Polynomial interpolation at Chebyshev points is one of the most classical and successful tools in numerical approximation; see, e.g.,
\cite{Trefethen2013ATAP}. Let
\begin{equation}\label{equ:ChebyshevLobatto}
    x_j^{(0)} = \cos\frac{j\pi}{n},\qquad j=0,\ldots,n,
\end{equation}
be the Chebyshev--Lobatto points on \([-1,1]\). Their Lebesgue constant satisfies
\begin{equation}\label{equ:asymptotic}
\Lambda_n^{(0)}
    \leq
    \frac{2}{\pi}\log (n+1) + 1,\qquad 
    \Lambda_n^{(0)}
    =
    \frac{2}{\pi}\log n + O(1);
\end{equation} 
see \cite{MR194799,MR145246}. This logarithmic growth is essentially optimal among interpolation schemes using \(n+1\) points on \([-1,1]\). The question studied in this paper is deliberately elementary:
if the Chebyshev points are perturbed, when does the Lebesgue constant remain of order \(\log n\) in the sense of \eqref{equ:asymptotic}, and when does it begin to grow algebraically or worse? Since Chebyshev points cluster near \(\pm1\), a natural way to perturb Chebyshev points is not directly in the \(x\)-variable but the angular variable. We then consider the following perturbation model.
\begin{definition}[Perturbed Chebyshev--Lobatto points]
Let \(n\geq1\). For \(j=0,\ldots,n\), define
\[
    \theta_j^{(0)}=\frac{j\pi}{n}.
\]
A set of perturbed Chebyshev--Lobatto points is given by
\begin{equation}\label{equ:perturbedChebyshevLobatto}
    \theta_j=\theta_j^{(0)}+\eps_j,
    \qquad
    x_j=\cos\theta_j,
\end{equation}
where
\[
    \theta_0=0,\qquad \theta_n=\pi,
\]
and the interior perturbations satisfy
\[
    |\eps_j|\leq \sigma_n,
    \qquad j=1,\ldots,n-1.
\]
\end{definition}
The Chebyshev--Lobatto points are exactly equally spaced in \(\theta\). Thus the natural mesh width is
\[
    h_n=\frac{\pi}{n},
\]
which is of order \(n^{-1}\). The natural dimensionless perturbation parameter
is therefore the mesh fraction \(n\sigma_n\). Perturbations with
\(n\sigma_n=O(1)\) are of the same order as the angular mesh, while
\(n\sigma_n\ll1\) are smaller than the mesh. The analysis in this paper singles out
the more restrictive small-constant condition
\[
    n\sigma_n(\log n+1)\le \eta,
    \qquad \eta>0 \hbox{ sufficiently small},
\]
which is the regime in which we can prove deterministic logarithmic stability
of the Lebesgue constant. Throughout the paper, \(\log\) denotes the natural
logarithm.

\begin{figure}[htbp]
  \centering
  \includegraphics[width=0.72\textwidth]{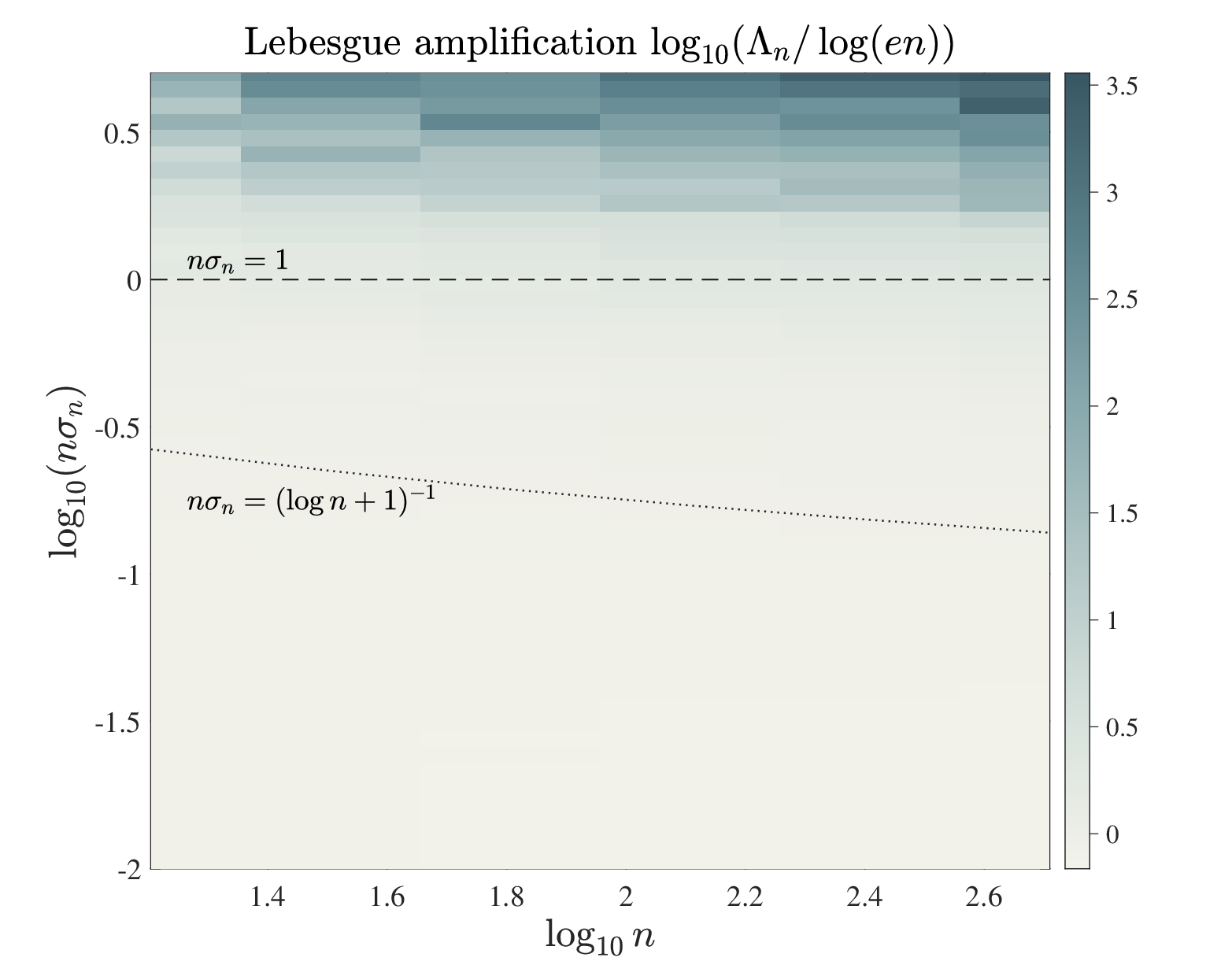}
  \caption{Numerical motivation. The colour indicates
  \(\log_{10}(\Lambda_n/\log(en))\) for random angular perturbations with
  \(|\varepsilon_j|\le \sigma_n\), plotted against the degree \(n\) and the
  mesh fraction \(n\sigma_n\). The dashed line marks the mesh scale
  \(n\sigma_n=1\), and the dotted line marks the deterministic stability scale
  \(n\sigma_n=(\log n+1)^{-1}\).}
  \label{fig:motivation}
\end{figure}

Figure~\ref{fig:motivation} shows the effect of varying the mesh fraction
\(n\sigma_n\). The plotted quantity is not \(\Lambda_n\) itself, but
\(\Lambda_n/\log(en)\), so that values of order one correspond to Chebyshev
scale growth. A stable region is visible for small \(n\sigma_n\), while
larger mesh fractions lead to much stronger amplification. The natural mesh
scale \(n\sigma_n=1\) is marked for orientation, but the numerics suggest a
more subtle boundary: the transition layer is roughly parallel to the curve
\[
    n\sigma_n=(\log n+1)^{-1}.
\]
This observation motivates the deterministic estimate proved in this paper.
We do not claim that this curve is the exact probabilistic threshold for random
perturbations. Rather, it identifies the small-constant scale at which a
worst-case argument guarantees that the Lebesgue constant remains logarithmic.

Two related lines of work should be mentioned. Austin and Trefethen
\cite{MR3693604} studied trigonometric interpolation and quadrature on
perturbed equispaced periodic grids, where fixed mesh-fraction perturbations
can be allowed. The present paper concerns algebraic interpolation at
Chebyshev--Lobatto points. The relevant geometry is no longer the periodic
grid itself, but the cosine map from a uniform angular grid to the interval.
The result is therefore local in the mesh fraction: we prove deterministic
logarithmic stability for angular perturbations of size
\(O((n\log n)^{-1})\), with a sufficiently small constant.

The work is also related to stability inequalities for norming sets, especially
those of Piazzon and Vianello~\cite{PiazzonVianello2018}. Their Markov-type
framework gives a general perturbation principle for Lebesgue constants. For
Chebyshev--Lobatto points, the angular parametrization improves the relevant
one-dimensional scale: the usual Markov factor \(n^2\) in the physical
variable is replaced by the trigonometric Bernstein factor \(n\) for
\(p(\cos\theta)\). This distinction, and the relation between the two
perturbation scales, is discussed after the main theorem.

The paper is organized as follows. Section~\ref{sec:interpolation} recalls the
interpolation notation and the operator interpretation of the Lebesgue
constant. Section~\ref{sec:main-theorem} proves the deterministic stability
estimate and gives a near-collision obstruction at the angular mesh scale.
Section~\ref{sec:runge-bernstein} translates the logarithmic stability bound
into analytic interpolation estimates and a Runge-type interpretation.
Section~\ref{sec:pseudospectral} records the corresponding consequence for
pseudospectral differentiation, and Section~\ref{sec:numerical} presents the
numerical experiments.

\section{Interpolation and Lebesgue constants}
\label{sec:interpolation}

Let \(\mathcal{X}_n = \{x_0,\dots,x_n\} \subset [-1,1]\) be a set of distinct nodes and let $\{f(x_j)\}_{j=0}^n$ be function values sampled on $\mathcal{X}_n$. The Lagrange interpolation is defined by
\[
I_n f(x) = \sum_{j=0}^n f(x_j)\, \ell_j(x),
\]
where the Lagrange basis polynomials are given by
\[
\ell_j(x) = \prod_{\substack{k=0,\, k\neq j}}^n \frac{x-x_k}{x_j-x_k}, \qquad j=0,\dots,n.
\]
A key quantity controlling the stability of this interpolation is the Lebesgue function,
\[
\lambda_n(x) = \sum_{j=0}^n |\ell_j(x)|,
\]
whose maximum over \([-1,1]\) is the Lebesgue constant
\begin{equation}\label{equ:Lebesgue}
\Lambda_n = \max_{x\in[-1,1]}\lambda_n(x).
\end{equation}
The Lebesgue constant $\Lambda_n$ measures how much errors in the data \(f(x_j)\) are amplified in the interpolant. Indeed, the interpolation error satisfies the classical inequality
\[
\|f-I_n f\|_\infty \le (1+\Lambda_n)\, E_n(f),
\]
where 
\[E_n(f) = \inf_{p\in \mathbb{P}_n} \|f-p\|_\infty\] 
denotes the best uniform approximation error; see, e.g., \cite[Chapter 15]{Trefethen2013ATAP}. Hence, the Lebesgue constant \(\Lambda_n\) quantifies the departure of the interpolation from the best uniform polynomial approximation.

\section{Lebesgue constants under angular perturbations}
\label{sec:main-theorem}

In this section we prove a deterministic stability theorem for the
Lebesgue constant under small angular perturbations of the
Chebyshev--Lobatto points. The argument is based on a simple comparison
between polynomial data sampled at the perturbed nodes and polynomial data
sampled at the unperturbed Chebyshev--Lobatto nodes.

Recall that 
\[
    x_j^{(0)}=\cos\theta_j^{(0)},
    \qquad
    \theta_j^{(0)}=\frac{j\pi}{n},
    \qquad
    j=0,\ldots,n,
\]
denote unperturbed Chebyshev--Lobatto nodes and 
\[
    x_j=\cos\theta_j,
    \qquad
    \theta_j=\theta_j^{(0)}+\varepsilon_j,
    \qquad
    j=0,\ldots,n,
\]
are perturbed ones. We assume throughout this section that the perturbed nodes \(x_0,\ldots,x_n\)
are distinct. Let \(\Lambda_n^{(0)}\) denote the Lebesgue constant of the
Chebyshev--Lobatto points \(x_j^{(0)}\), and let \(\Lambda_n\) denote the
Lebesgue constant of the perturbed points \(x_j\).

\subsection{A logarithmic stability bound}

We first consider an operator-norm characterization of the Lebesgue constant.

\begin{lemma}\label{lem:characterization}
For a
set of distinct interpolation nodes \(\mathcal{X}_n=\{x_0,\ldots,x_n\}\subset[-1,1]\),
let 
\[\Lambda(\mathcal{X}_n)=\max_{x\in[-1,1]}\sum_{j=0}^n |\ell_j(x)|\]
denote the corresponding Lebesgue constant. Then
\begin{equation}\label{equ:characterization}
    \Lambda(\mathcal{X}_n)
    =
    \sup_{p\in\mathbb{P}_n,\,
        \max_{0\le j\le n}|p(x_j)|\le 1}
        \|p\|_{L^\infty[-1,1]}.
\end{equation}
\end{lemma}
\begin{proof}
For  every
\(p\in\mathbb{P}_n\),
\[
    p(x)=\sum_{j=0}^n p(x_j)\ell_j(x),
\]
and therefore
\[
    |p(x)|
    \le
    \max_{0\le j\le n}|p(x_j)|
    \sum_{j=0}^n |\ell_j(x)|.
\]
Taking the maximum over \(x\in[-1,1]\) gives
\[
    \|p\|_\infty
    \le
    \Lambda(\mathcal{X}_n)\max_{0\leq j\leq n}|p(x_j)|.
\]
Conversely, let \(x^*\in[-1,1]\) be a point where the Lebesgue
function attains its maximum, and define
\[
    a_j=\operatorname{sgn}\ell_j(x^*),\qquad 0\le j\le n,
\]
with an arbitrary choice when \(\ell_j(x^*)=0\). Let
\[
    p^*(x)=\sum_{j=0}^n a_j\ell_j(x).
\]
Then \(p^*\in\mathbb P_n\) and
\[
    \max_{0\le j\le n}|p^*(x_j)|=\max_{0\le j\le n}|a_j|\le 1.
\]
Moreover,
\[
    |p^*(x^*)|
    =
    \left|\sum_{j=0}^n a_j\ell_j(x^*)\right|
    =
    \sum_{j=0}^n |\ell_j(x^*)|
    =
    \Lambda(\mathcal X_n).
\]
Hence
\[
    \sup_{p\in\mathbb{P}_n,\,
        \max_{0\le j\le n}|p(x_j)|\le 1}
        \|p\|_{L^\infty[-1,1]}
    \ge \Lambda(\mathcal X_n).
\]
Together with the previous inequality, the desired characterization \eqref{equ:characterization} follows.
\end{proof}

\begin{theorem}[Deterministic stability under angular perturbations]
\label{thm:main-angular-stability}
Assume that the perturbed nodes are distinct and that the angular perturbations
are uniformly bounded by
\[
    \max_{0\le j\le n}|\varepsilon_j|\le \sigma_n,
\]
where \(\sigma_n>0\) denotes the maximal magnitude of the perturbation in the
angular variable. If the magnitude $\sigma_n$ satisfies
\[
    n\sigma_n\Lambda_n^{(0)}<1,
\]
then the Lebesgue constant of the perturbed nodes $\{x_j\}_{j=0}^n$ satisfies
\begin{equation}\label{equ:mainestimate}
    \Lambda_n
    \le
    \frac{\Lambda_n^{(0)}}{1-n\sigma_n\Lambda_n^{(0)}}.
\end{equation}

\end{theorem}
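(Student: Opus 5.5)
We will prove the bound through the operator characterization of Lemma~\ref{lem:characterization}, comparing values at the perturbed nodes with values at the unperturbed Chebyshev--Lobatto nodes. Fix \(p\in\mathbb P_n\) with \(\max_j|p(x_j)|\le1\). The goal is to show
\[
\|p\|_\infty\le \frac{\Lambda_n^{(0)}}{1-n\sigma_n\Lambda_n^{(0)}}.
\]
Taking the supremum over such \(p\) and applying \eqref{equ:characterization} to the perturbed node set then gives \eqref{equ:mainestimate}.

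The key device is the cosine parametrization. Put \(q(\theta)=p(\cos\theta)\). This is a real trigonometric polynomial of degree at most \(n\), and \(\|q\|_{L^\infty(\mathbb R)}=\|p\|_{L^\infty[-1,1]}\). Bernstein's inequality for trigonometric polynomials gives \(\|q'\|_\infty\le n\|q\|_\infty=n\|p\|_\infty\). By the mean value theorem in \(\theta\), for each \(j\) we get
\[
|p(x_j^{(0)})-p(x_j)|=|q(\theta_j^{(0)})-q(\theta_j^{(0)}+\varepsilon_j)|\le |\varepsilon_j|\,\|q'\|_\infty\le n\sigma_n\|p\|_\infty .
\]
Consequently \(\max_j|p(x_j^{(0)})|\le 1+n\sigma_n\|p\|_\infty\). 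This step is where the angular geometry pays off: a Markov inequality in \(x\) would only give the factor \(n^2\) times the physical displacement.

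We then apply the first half of Lemma~\ref{lem:characterization} to the unperturbed nodes. Since \(p\in\mathbb P_n\), we have \(\|p\|_\infty\le\Lambda_n^{(0)}\max_j|p(x_j^{(0)})|\), and therefore
\[
\|p\|_\infty\le \Lambda_n^{(0)}\bigl(1+n\sigma_n\|p\|_\infty\bigr).
\]
Because \(\|p\|_\infty\) is finite and \(n\sigma_n\Lambda_n^{(0)}<1\), we can move the \(\|p\|_\infty\) term to the left and divide by \(1-n\sigma_n\Lambda_n^{(0)}>0\). This yields the claimed bound.

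The main obstacle is justifying the Bernstein step cleanly, namely that \(p(\cos\theta)\) is a trigonometric polynomial of degree \(n\) whose sup-norm over all real \(\theta\) equals \(\|p\|_{L^\infty[-1,1]}\). Perturbed angles may leave \([0,\pi]\), although here \(\theta_0=0\) and \(\theta_n=\pi\) are fixed. This causes no difficulty, because \(q\) is \(2\pi\)-periodic and even, so Bernstein's inequality holds on all of \(\mathbb R\).

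The distinctness assumption is used only so that the perturbed Lebesgue constant is defined and Lemma~\ref{lem:characterization} applies to the perturbed set. We also note that the characterization is used in both directions. For the unperturbed nodes we need only the inequality \(\|p\|_\infty\le\Lambda_n^{(0)}\max_j|p(x_j^{(0)})|\). For the perturbed nodes we need the equality, in order to convert the uniform bound over \(p\) into a bound on \(\Lambda_n\).
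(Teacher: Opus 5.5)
Your proposal is correct and follows the paper's proof essentially step for step: the cosine parametrization $q(\theta)=p(\cos\theta)$, Bernstein's inequality, the mean value theorem to compare $p(x_j)$ with $p(x_j^{(0)})$, the unperturbed Lebesgue bound from Lemma~\ref{lem:characterization}, and rearrangement using $n\sigma_n\Lambda_n^{(0)}<1$. The paper bounds the mean value step by $\|q'\|_{L^\infty[0,\pi]}$, whereas you use $\|q'\|_{L^\infty(\mathbb R)}$; yours is the cleaner justification when perturbed angles leave $[0,\pi]$.
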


\begin{proof}
Let \(p\in\mathbb{P}_n\) satisfy
\[
    \max_{0\le j\le n}|p(x_j)|\le 1.
\]
We shall bound \(\|p\|_\infty\) in terms of the Lebesgue constant \(\Lambda_n^{(0)}\) of unperturbed nodes. Define
\[
    q(\theta):=p(\cos\theta),
    \qquad \theta\in\mathbb R.
\]
Since \(p\) has degree at most \(n\), \(q\) is a trigonometric polynomial of
degree at most \(n\). Hence Bernstein's inequality for trigonometric
polynomials gives
\[
    \|q'\|_{L^\infty(\mathbb R)}
    \le
    n\|q\|_{L^\infty(\mathbb R)}.
\]
Since $\|q\|_{L^\infty(\mathbb R)}
    =
    \|p\|_{L^\infty[-1,1]}$,
we obtain
\[
    \|q'\|_{L^\infty(\mathbb R)}
    \le
    n\|p\|_{L^\infty[-1,1]}.
\]
For each \(j=0,\ldots,n\), the mean value theorem gives
\[
\begin{aligned}
    |p(x_j^{(0)})|
    &=
    |q(\theta_j^{(0)})| \le
    |q(\theta_j)|
    +
    |q(\theta_j^{(0)})-q(\theta_j)|  \\
    &\le
    |p(x_j)|
    +
    \|q'\|_{L^\infty[0,\pi]}
    |\theta_j^{(0)}-\theta_j|  \\
    &\le
    1
    +
    n\sigma_n
    \|p\|_{L^\infty[-1,1]}.
\end{aligned}
\]
Therefore
\begin{equation}\label{equ:previousestimate}
    \max_{0\le j\le n}|p(x_j^{(0)})|
    \le
    1+n\sigma_n\|p\|_{L^\infty[-1,1]}.
\end{equation}
Since \(p\in\mathbb{P}_n\), it is exactly the polynomial interpolant of its values
at the Chebyshev--Lobatto points. By the definition of \(\Lambda_n^{(0)}\) and Lemma \ref{lem:characterization},
\[
    \|p\|_{L^\infty[-1,1]}
    \le
    \Lambda_n^{(0)}
    \max_{0\le j\le n}|p(x_j^{(0)})|.
\]
Combining this with the previous estimate \eqref{equ:previousestimate} gives
\[
    \|p\|_{L^\infty[-1,1]}
    \le
    \Lambda_n^{(0)}
    \left(
        1+n\sigma_n\|p\|_{L^\infty[-1,1]}
    \right).
\]
Equivalently,
\[
    \left(1-n\sigma_n\Lambda_n^{(0)}\right)
    \|p\|_{L^\infty[-1,1]}
    \le
    \Lambda_n^{(0)}.
\]
Since \(n\sigma_n\Lambda_n^{(0)}<1\), we  obtain
\[
    \|p\|_{L^\infty[-1,1]}
    \le
    \frac{\Lambda_n^{(0)}}{1-n\sigma_n\Lambda_n^{(0)}}.
\]
This bound holds for every \(p\in\mathbb{P}_n\) satisfying
\(\max_j|p(x_j)|\le1\). By the operator-norm characterization of the
Lebesgue constant in Lemma \ref{lem:characterization}, the desired estimate \eqref{equ:mainestimate} follows.
\end{proof}

\begin{theorem}[Explicit logarithmic stability regime]
\label{thm:explicit-log-stability}
Let
\[
    x_j=\cos\left(\frac{j\pi}{n}+\varepsilon_j\right),
    \qquad j=0,\ldots,n,
\]
be distinct perturbed Chebyshev--Lobatto nodes, with
\(\varepsilon_0=\varepsilon_n=0\) and
\(\max_j|\varepsilon_j|\le\sigma_n\).
Let $\eta$ be a constant satisfying
\[0<\eta< \left(1+\frac{2}{\pi}\right)^{-1}\approx 0.611015,\]
and assume that
 \begin{equation}\label{equ:explicit-stability-assumption}
    n\sigma_n\left(\log n+1\right)\le \eta.
\end{equation}
Then the Lebesgue constant $\Lambda_n$ of the perturbed nodes satisfies
\[
    \Lambda_n
    \le
    C_\eta\left(\log{n} +1\right),
\]
where
\[
    C_\eta
    =
    \dfrac{1+\dfrac{2}{\pi}}
         {1-\left(1+\dfrac{2}{\pi}\right)\eta}.
\]
In particular, the perturbed points have logarithmically growing Lebesgue
constants whenever
\[
    n\sigma_n\log n\to0.
\]
\end{theorem}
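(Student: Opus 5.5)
The plan is to derive this as a direct corollary of Theorem~\ref{thm:main-angular-stability}, using the classical bound \eqref{equ:asymptotic} for the unperturbed Lebesgue constant. The only real work is converting \(\frac{2}{\pi}\log(n+1)+1\) into a multiple of \(\log n+1\).

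\textbf{Step 1: a uniform constant.} I will establish
\[
    \Lambda_n^{(0)}\le \Bigl(1+\frac{2}{\pi}\Bigr)(\log n+1)
\]
for every \(n\ge1\). The first half of \eqref{equ:asymptotic} gives \(\Lambda_n^{(0)}\le \frac{2}{\pi}\log(n+1)+1\). Since \(\log(n+1)=\log n+\log(1+1/n)\le \log n+\log 2\) and \(\log 2<1\), we get
\[
    \Lambda_n^{(0)}\le \tfrac{2}{\pi}\log n+\tfrac{2}{\pi}+1\le \Bigl(1+\tfrac{2}{\pi}\Bigr)(\log n+1).
\]
The last inequality holds because the difference is \(\log n\ge0\). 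The case \(n=1\) needs no separate treatment: there \(\Lambda_1^{(0)}=1\), which the bound covers.

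\textbf{Step 2: verify the hypothesis of Theorem~\ref{thm:main-angular-stability}.} By \eqref{equ:explicit-stability-assumption} and Step 1,
\[
    n\sigma_n\Lambda_n^{(0)}\le \Bigl(1+\tfrac{2}{\pi}\Bigr)\,n\sigma_n(\log n+1)\le \Bigl(1+\tfrac{2}{\pi}\Bigr)\eta<1.
\]
The strict inequality uses the assumed bound on \(\eta\). Distinctness of the perturbed nodes is assumed in the statement, and \(\eps_0=\eps_n=0\) is compatible with the earlier setup.

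\textbf{Step 3: conclude.} Apply \eqref{equ:mainestimate}. The function \(t\mapsto t/(1-n\sigma_n t)\) is increasing on \([0,1/(n\sigma_n))\). Moreover, the Step 1 bound \(T:=(1+\frac{2}{\pi})(\log n+1)\) itself satisfies \(n\sigma_n T\le (1+\frac{2}{\pi})\eta<1\), so it lies in this interval. Hence
\[
    \Lambda_n\le \frac{T}{1-n\sigma_n T}\le \frac{(1+\frac{2}{\pi})(\log n+1)}{1-(1+\frac{2}{\pi})\eta}=C_\eta(\log n+1).
\]

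For the final assertion, suppose \(n\sigma_n\log n\to0\). Then \(n\sigma_n\to0\) as well, since \(\log n\ge1\) for \(n\ge3\). Hence \(n\sigma_n(\log n+1)\le\eta\) for all sufficiently large \(n\), with any fixed admissible \(\eta\), for instance \(\eta=1/2\). Steps 1--3 then give \(\Lambda_n=O(\log n)\).

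The main point requiring care is the monotonicity step in Step 3. We must bound \(\Lambda_n^{(0)}/(1-n\sigma_n\Lambda_n^{(0)})\) using an upper bound on \(\Lambda_n^{(0)}\) appearing in both numerator and denominator, and this substitution is valid only because \(T\) also keeps the denominator positive. Beyond that, the argument is routine.
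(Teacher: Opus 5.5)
Your proof is correct and follows the paper's argument essentially step for step: bound $\Lambda_n^{(0)}$ by $(1+\frac{2}{\pi})(\log n+1)$, check that $n\sigma_n\Lambda_n^{(0)}\le(1+\frac{2}{\pi})\eta<1$, and invoke Theorem~\ref{thm:main-angular-stability}. The differences are cosmetic. The paper uses $n+1\le en$ and $1\le\log(en)$ instead of your $\log(1+1/n)\le\log 2<1$. It also bounds the numerator above by $(1+\frac{2}{\pi})(\log n+1)$ and the denominator below by $1-(1+\frac{2}{\pi})\eta$ separately, so the monotonicity step you flag as delicate is not actually needed.
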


\begin{proof}
We use the explicit estimate
\[
    \Lambda_n^{(0)}
    \le
    \frac{2}{\pi}\log(n+1)+1
\]
 for the Chebyshev--Lobatto Lebesgue constant.
Since \(n+1\le en\) and \(1\le \log(en)\) for \(n\ge1\), we have $\log(n+1)\le \log(en)$ and
\[
    \Lambda_n^{(0)}
    \le
    \frac{2}{\pi}\log(en)+\log(en)
    =
    \left(1+\frac{2}{\pi}\right)\left(\log{n} +1\right).
\]
It follows from the assumption \eqref{equ:explicit-stability-assumption} that
\[
    n\sigma_n\Lambda_n^{(0)}
    \le
    \left(1+\frac{2}{\pi}\right)n\sigma_n(\log n +1)
    \le
    \left(1+\frac{2}{\pi}\right)\eta
    <1.
\]
Hence the deterministic stability estimate \eqref{equ:mainestimate} of
Theorem~\ref{thm:main-angular-stability} applies and gives
\[
    \Lambda_n
    \le
    \frac{\Lambda_n^{(0)}}
         {1-n\sigma_n\Lambda_n^{(0)}}.
\]
Using the two bounds above, we obtain
\[
    \Lambda_n
    \le
    \frac{
    \left(1+\frac{2}{\pi}\right)\log(en)
    }{
    1-\left(1+\frac{2}{\pi}\right)\eta
    }.
\]
Thus
\[
    \Lambda_n
    \le
    C_\eta\log(en),
    \qquad
    C_\eta
    =
    \frac{1+\frac{2}{\pi}}
         {1-\left(1+\frac{2}{\pi}\right)\eta}.
\]
Finally, if \(n\sigma_n\log n\to0\), then for every fixed
\[
    0<\eta<\left(1+\frac{2}{\pi}\right)^{-1},
\]
the condition \(n\sigma_n\log(en)\le\eta\) holds for all sufficiently large
\(n\). Hence \(\Lambda_n=O(\log n)\).
\end{proof}

\begin{remark}
Theorem \ref{thm:explicit-log-stability} gives a deterministic stability regime. It does not claim to
identify a sharp threshold for random perturbations.
Rather, it shows that angular perturbations of size
\[
    \sigma_n=o\!\left(\frac{1}{n\log n}\right)
\]
cannot destroy the logarithmic growth of the Chebyshev--Lobatto Lebesgue
constant. The numerical experiments suggest that typical random perturbations
may remain stable in a larger regime, but this sharper probabilistic question
is separate from the deterministic estimate proved here.
\end{remark}

\begin{remark}[Angular versus \(x\)-variable perturbations]
The scale in Theorem~\ref{thm:explicit-log-stability} should be interpreted
in the angular variable. It is not the same as a uniform perturbation scale in
the physical variable \(x\). This is the point at which the present result
differs from the general norming-set perturbation principle of
Piazzon and Vianello~\cite{PiazzonVianello2018}. In that framework, stability
is obtained from a Lebesgue constant and a Markov-type constant for the
underlying polynomial space. On the interval, this uses the classical Markov
inequality
\[
    \|p'\|_\infty\le n^2\|p\|_\infty,
\]
and therefore gives, for the Chebyshev--Lobatto grid, perturbations satisfying
\[
    \max_j |x_j-x_j^{(0)}|
    =
    O\!\left(\frac{1}{n^2\log n}\right)
\]
with a sufficiently small constant, because
\(\Lambda_n^{(0)}=O(\log n)\). This is the usual interval scale obtained from
a Markov-type perturbation argument.

By contrast, the angular parametrization gives
\[
    x=\cos\theta,
    \qquad
    q(\theta)=p(\cos\theta),
\]
and therefore the trigonometric Bernstein inequality gives only a factor
\(n\). Consequently, angular perturbations of size
\[
    \sigma_n=O\!\left(\frac{1}{n\log n}\right)
\]
are covered when the implicit constant is sufficiently small. Thus the
improvement is not a contradiction of the Markov--norming-set bound; it comes
from measuring perturbations in the Chebyshev angular variable. In the
\(x\)-variable such perturbations are nonuniform: they are of order
\(O((n^2\log n)^{-1})\) near the endpoints and of order
\(O((n\log n)^{-1})\) in the interior. This is the geometry one expects for
perturbations that preserve the Chebyshev clustering.

\end{remark}

\subsection{An obstruction at the mesh scale}

The preceding estimate gives a deterministic stability guarantee at the
small-constant scale
\[
    \sigma_n = O\!\left(\frac{1}{n\log n}\right).
\]
It is natural to ask whether this scale can be enlarged all the way to the
angular mesh width \(h_n=\pi/n\).  This is the first larger scale one would
try, since \(h_n\) is the spacing of the unperturbed nodes in the angular
variable.  The following elementary construction shows that arbitrary
perturbations at the mesh scale cannot preserve Chebyshev-type stability.  The
obstruction is a near collision of two neighbouring angular nodes.

\begin{proposition}[A near-collision obstruction]
Let \(h_n=\pi/n\), and let \(\tau_n\in(0,1/4)\).  For every \(n\ge 4\), there
exists a perturbed Chebyshev--Lobatto set
\[
    x_j=\cos\theta_j,\qquad j=0,\ldots,n,
\]
with
\[
    \max_{0\le j\le n}|\theta_j-jh_n|\le \frac{h_n}{2},
\]
such that its Lebesgue constant satisfies
\[
    \Lambda_n \ge \frac{1}{\pi n\tau_n}.
\]
Consequently, by taking for example
\[
    \tau_n=\frac{1}{n(\log n)^2},
\]
one obtains a sequence of angular perturbations of size \(O(1/n)\) for which
\[
    \Lambda_n \ge \frac{1}{\pi}(\log n)^2.
\]
In particular, arbitrary mesh-scale angular perturbations are not covered by a
logarithmic stability principle.  They can lead to Lebesgue constants growing
faster than \(\log n\).
\end{proposition}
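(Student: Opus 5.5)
The plan is to build the bad configuration by moving two neighbouring interior angular nodes towards each other until their separation in the angular variable is of order \(\tau_n\). The lower bound on \(\Lambda_n\) then comes from the Bernstein inequality argument in the proof of Theorem~\ref{thm:main-angular-stability}, run in reverse: a Lagrange basis polynomial must go from \(1\) to \(0\) across a very short angular interval. That forces a large derivative of \(q(\theta)=\ell(\cos\theta)\), and Bernstein's inequality converts this into a large sup norm.

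\emph{Construction.} First I dispose of the trivial case \(\tau_n\ge 1/n\). There I take the unperturbed nodes and use \(\Lambda_n\ge 1\). This holds because \(\sum_j\ell_j\equiv1\) gives \(\lambda_n(x)\ge|\sum_j\ell_j(x)|=1\). Then \(1\ge 1/(\pi n\tau_n)\).

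So assume \(\tau_n<1/n\) and set \(s=\pi\tau_n<h_n\). Since \(n\ge4\), the indices \(1,2\) are interior, and I define
\[
\theta_1=\tfrac32h_n-\tfrac s2,\qquad \theta_2=\tfrac32h_n+\tfrac s2,\qquad \theta_j=jh_n\ (j\ne1,2).
\]
Each of the two moved nodes is displaced by \((h_n-s)/2\le h_n/2\), and \(\theta_0=0\), \(\theta_n=\pi\) are unchanged. All angles lie in \([0,\pi]\) and are strictly increasing, because \(h_n<\theta_1<\theta_2<2h_n\). Since \(\cos\) is injective on \([0,\pi]\), the nodes \(x_j\) are distinct.

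\emph{Lower bound.} I take \(p=\ell_1\), the Lagrange polynomial of \(x_1\) for the perturbed set, and put \(q(\theta)=p(\cos\theta)\). Then \(q(\theta_1)=1\) and \(q(\theta_2)=0\), so by the mean value theorem \(\|q'\|_\infty\ge 1/s\). On the other hand, \(q\) is a trigonometric polynomial of degree at most \(n\), so Bernstein's inequality gives
\[
\|q'\|_\infty\le n\|p\|_{L^\infty[-1,1]}\le n\Lambda_n,
\]
using \(|\ell_1(x)|\le\lambda_n(x)\le\Lambda_n\). Hence \(\Lambda_n\ge 1/(ns)=1/(\pi n\tau_n)\).

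\emph{The example.} For \(\tau_n=1/(n(\log n)^2)\) I must check \(\tau_n<1/4\) when \(n\ge4\). This holds since \(4(\log 4)^2>4\). Substituting gives \(\Lambda_n\ge(\log n)^2/\pi\), and the displacements are at most \(h_n/2=O(1/n)\).

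The only delicate point is the parameter bookkeeping. The requirement that the perturbation be at most \(h_n/2\) forces \(s\le h_n\), and that is why the case split at \(\tau_n=1/n\) is needed. The analytic core is just the reversed Bernstein argument.
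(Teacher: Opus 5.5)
Your proof is correct, but it takes a different route from the paper's.

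\textbf{The paper's argument.} It uses one configuration for every \(\tau_n\in(0,1/4)\). It moves the middle pair \(m=\lfloor n/2\rfloor,\ m+1\) to angular gap \(2\tau_n h_n\), so \(\tau_n\) is literally a mesh fraction and no case split is needed. It tests with the polynomial that takes the values \(1,-1\) on the pair and \(0\) elsewhere. It then argues in the physical variable: the mean value theorem, Markov's inequality \(\|p'\|_\infty\le n^2\|p\|_\infty\), and the Lipschitz bound \(|x_m-x_{m+1}|\le|\theta_m-\theta_{m+1}|\).

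\textbf{Your argument.} You work in the angular variable with the trigonometric Bernstein inequality. This is sharper by a factor of \(n\): applied to the paper's own configuration, your argument would give \(\Lambda_n\ge 1/(\pi\tau_n)\) instead of \(1/(\pi n\tau_n)\). Since Bernstein's inequality is uniform in \(\theta\), it also does not matter that your pair sits at indices \(1,2\) near the endpoint.

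\textbf{What each buys.} You spend the extra factor of \(n\) on a milder collision, with gap \(\pi\tau_n=n\tau_n h_n\). This shows that a gap of \(h_n/(\log n)^2\) already forces \(\Lambda_n\ge(\log n)^2/\pi\), whereas the paper needs a gap of \(2h_n/(n(\log n)^2)\) for the same conclusion. The price is that your gap must stay below \(h_n\). That is why you need the separate case \(\tau_n\ge 1/n\), which you handle correctly via \(\Lambda_n\ge1\). Your approach is closer to the paper's angular philosophy. The paper's is a single uniform construction that needs only Markov's inequality, at the cost of a bound weaker than its configuration actually supports.
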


\begin{proof}
Set \(h_n=\pi/n\), and choose \(m=\lfloor n/2\rfloor\).  We perturb only the two
neighbouring angular nodes \(m h_n\) and \((m+1)h_n\).  Define
\[
    \theta_m=\left(m+\frac12\right)h_n-\tau_n h_n,
    \qquad
    \theta_{m+1}=\left(m+\frac12\right)h_n+\tau_n h_n,
\]
and set \(\theta_j=jh_n\) for all \(j\ne m,m+1\).  Then the angular perturbations
satisfy
\[
    |\theta_m-mh_n|=|\theta_{m+1}-(m+1)h_n|
    =\left(\frac12-\tau_n\right)h_n
    \le \frac{h_n}{2},
\]
and all nodes are distinct.

Let \(x_j=\cos\theta_j\), and let \(p_n\in\mathbb P_n\) be the interpolation
polynomial defined by the nodal values
\[
    p_n(x_m)=1,\qquad p_n(x_{m+1})=-1,\qquad
    p_n(x_j)=0\quad (j\ne m,m+1).
\]
Thus
\[
    \max_{0\le j\le n}|p_n(x_j)|\le 1.
\]
By the operator-norm characterization of the Lebesgue constant in Lemma \ref{lem:characterization},
\[
    \Lambda_n\ge \|p_n\|_{L^\infty[-1,1]}.
\]

On the other hand, by the mean value theorem,
\[
    \|p_n'\|_{L^\infty[-1,1]}
    \ge
    \frac{|p_n(x_m)-p_n(x_{m+1})|}{|x_m-x_{m+1}|}
    =
    \frac{2}{|x_m-x_{m+1}|}.
\]
Markov's inequality on \([-1,1]\) gives
\[
    \|p_n'\|_{L^\infty[-1,1]}
    \le
    n^2 \|p_n\|_{L^\infty[-1,1]}.
\]
Therefore
\[
    \|p_n\|_{L^\infty[-1,1]}
    \ge
    \frac{2}{n^2 |x_m-x_{m+1}|}.
\]
It remains to estimate the distance between the two nearly colliding nodes.
Since the cosine function is Lipschitz with constant one,
\[
    |x_m-x_{m+1}|
    =
    |\cos\theta_m-\cos\theta_{m+1}|
    \le
    |\theta_m-\theta_{m+1}|
    =
    2\tau_n h_n
    =
    \frac{2\pi\tau_n}{n}.
\]
Hence
\[
    \|p_n\|_{L^\infty[-1,1]}
    \ge
    \frac{2}{n^2(2\pi\tau_n/n)}
    =
    \frac{1}{\pi n\tau_n}.
\]
Since \(\Lambda_n\ge \|p_n\|_{L^\infty[-1,1]}\), the desired lower bound follows.
The choice \(\tau_n=1/(n(\log n)^2)\) gives
\[
    \Lambda_n\ge \frac{1}{\pi}(\log n)^2,
\]
which is superlogarithmic.  This completes the proof.
\end{proof}

\begin{remark}
The construction is deliberately adversarial: it uses an almost collision of two
neighbouring nodes.  It therefore does not show that the deterministic scale
\((n\log n)^{-1}\) is sharp.  Rather, it shows that the stability result cannot
be extended to arbitrary perturbations of order \(1/n\).  The intermediate
regime
\[
    \frac{1}{n\log n}\ll \sigma_n \ll \frac1n
\]
remains a separate question, and the numerical phase diagrams below suggest
that typical random perturbations may behave better than this worst-case
near-collision example.
\end{remark}

\section{Analytic interpolation and Runge-type behaviour}
\label{sec:runge-bernstein}

For \(\rho>1\), let \(E_\rho\) denote the open Bernstein ellipse with foci
\(\pm1\), whose boundary is the image of \(|w|=\rho\) under
\(z=(w+w^{-1})/2\). If \(f\) is analytic in \(E_\rho\) and continuous on
\(\overline E_\rho\), set
\[
    M_\rho=\max_{z\in \overline E_\rho}|f(z)|.
\]
We shall use the standard Bernstein estimate
\begin{equation}\label{equ:bernstein-best-approximation}
    E_n(f)
    :=
    \inf_{p\in\mathbb P_n}\|f-p\|_{L^\infty[-1,1]}
    \le
    \frac{2M_\rho}{\rho-1}\rho^{-n},
\end{equation}
see, for example, \cite{Trefethen2013ATAP,XiangChenWang2010}.

\begin{corollary}[Analytic interpolation at perturbed Chebyshev--Lobatto points]
\label{cor:no-runge-analytic}
Let \(f\) be analytic in \(E_\rho\), \(\rho>1\), and continuous on
\(\overline E_\rho\). For each \(n\), let \(I_n^\varepsilon f\) be the
interpolant at distinct perturbed Chebyshev--Lobatto nodes satisfying
\[
    x_j=\cos\left(\frac{j\pi}{n}+\varepsilon_j\right),
    \qquad
    \max_{0\le j\le n}|\varepsilon_j|\le \sigma_n .
\]
Assume that, for some fixed \(\eta\),
\[
    n\sigma_n(\log n+1)\le \eta,
    \qquad
    0<\eta<\left(1+\frac{2}{\pi}\right)^{-1}.
\]
Then
\begin{equation}\label{equ:analytic-interpolation-error}
    \|f-I_n^\varepsilon f\|_{L^\infty[-1,1]}
    \le
    \left(1+C_\eta\log(en)\right)
    \frac{2M_\rho}{\rho-1}\rho^{-n}.
\end{equation}
In particular, \(I_n^\varepsilon f\to f\) uniformly on \([-1,1]\).
\end{corollary}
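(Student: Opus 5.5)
The plan is to combine the classical Lebesgue inequality from Section~\ref{sec:interpolation} with Theorem~\ref{thm:explicit-log-stability} and the Bernstein estimate \eqref{equ:bernstein-best-approximation}.

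First, fix \(n\). The perturbed nodes are distinct, so \(I_n^\varepsilon\) is a well-defined linear projection onto \(\mathbb P_n\), and its operator norm on \(C[-1,1]\) is the Lebesgue constant \(\Lambda_n\) of the perturbed nodes. For any \(p\in\mathbb P_n\) we have \(I_n^\varepsilon p=p\). Writing
\[
f-I_n^\varepsilon f=(f-p)-I_n^\varepsilon(f-p)
\]
therefore gives
\[
\|f-I_n^\varepsilon f\|_\infty\le(1+\Lambda_n)\|f-p\|_\infty .
\]
Taking the infimum over \(p\in\mathbb P_n\) yields \((1+\Lambda_n)E_n(f)\).

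Second, the hypotheses here coincide with those of Theorem~\ref{thm:explicit-log-stability}: distinct nodes, \(|\varepsilon_j|\le\sigma_n\), \(n\sigma_n(\log n+1)\le\eta\) and \(\eta<(1+2/\pi)^{-1}\). The endpoint condition \(\varepsilon_0=\varepsilon_n=0\) is implicit in the perturbation model, and in any case the proof of Theorem~\ref{thm:main-angular-stability} does not use it. The theorem gives \(\Lambda_n\le C_\eta\log(en)\), since \(\log n+1=\log(en)\).

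Third, insert \eqref{equ:bernstein-best-approximation}, which applies because \(f\) is analytic in \(E_\rho\) and continuous on \(\overline E_\rho\). This gives exactly \eqref{equ:analytic-interpolation-error}.

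For uniform convergence, observe that \(\rho>1\) is fixed, so \(\log(en)\rho^{-n}\to0\) as \(n\to\infty\). The right-hand side of \eqref{equ:analytic-interpolation-error} therefore tends to zero.

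There is no real obstacle here. The only point needing care is to check that the hypotheses match Theorem~\ref{thm:explicit-log-stability}, in particular the distinctness of the nodes and the fact that \(\eta\) is fixed independently of \(n\), so that \(C_\eta\) is a uniform constant.
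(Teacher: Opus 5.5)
Your proposal is correct and follows the paper's proof exactly. You combine the Lebesgue inequality $\|f-I_n^\varepsilon f\|_\infty\le(1+\Lambda_n)E_n(f)$ with the bound $\Lambda_n\le C_\eta\log(en)$ from Theorem~\ref{thm:explicit-log-stability} and the Bernstein estimate \eqref{equ:bernstein-best-approximation}, and then use $\log(en)\rho^{-n}\to0$ (your side remark that the proof of Theorem~\ref{thm:main-angular-stability} never uses $\varepsilon_0=\varepsilon_n=0$ is also accurate).
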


\begin{proof}
The Lebesgue inequality gives
\[
    \|f-I_n^\varepsilon f\|_\infty
    \le
    (1+\Lambda_n)E_n(f).
\]
By Theorem~\ref{thm:explicit-log-stability},
\(\Lambda_n\le C_\eta\log(en)\). Together with
\eqref{equ:bernstein-best-approximation}, this proves
\eqref{equ:analytic-interpolation-error}. Since
\(\log(en)\rho^{-n}\to0\), the asserted convergence follows.
\end{proof}

\begin{corollary}[The Runge function under stable perturbations]
\label{cor:runge-free}
Let
\[
    f_R(x)=\frac{1}{1+25x^2}.
\]
Let \(I_n^\varepsilon f_R\) be defined as in
Corollary~\ref{cor:no-runge-analytic}. If
\[
    n\sigma_n(\log n+1)\le \eta,
    \qquad
    0<\eta<\left(1+\frac{2}{\pi}\right)^{-1},
\]
then, for every
\[
    1<\rho<\rho_R,
    \qquad
    \rho_R=\frac15+\sqrt{1+\frac1{25}},
\]
there is a constant \(M_\rho\) such that
\[
    \|f_R-I_n^\varepsilon f_R\|_{L^\infty[-1,1]}
    \le
    \left(1+C_\eta\log(en)\right)
    \frac{2M_\rho}{\rho-1}\rho^{-n}.
\]
Consequently \(I_n^\varepsilon f_R\to f_R\) uniformly on \([-1,1]\). In this
analytic setting, stable perturbed Chebyshev--Lobatto interpolation does not
exhibit Runge-type divergence.
\end{corollary}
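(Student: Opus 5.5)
The plan is to obtain this as a direct specialization of Corollary~\ref{cor:no-runge-analytic}. The only new ingredient is to check that \(f_R\) is analytic in \(E_\rho\) and continuous on \(\overline E_\rho\) for each \(1<\rho<\rho_R\). Once that is done, \(M_\rho=\max_{z\in\overline E_\rho}|f_R(z)|<\infty\) is a legitimate constant, and the displayed error bound follows verbatim from \eqref{equ:analytic-interpolation-error}.

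The singularities of \(f_R\) are the simple poles at \(z=\pm i/5\), where \(1+25z^2=0\). I will find the Bernstein ellipse parameter through these poles. A point \(z\) lies on \(\partial E_\rho\) exactly when \(\rho=|z+\sqrt{z^2-1}|\), with the branch chosen so that the modulus is \(\ge1\). For \(z=i/5\) we have \(z^2-1=-26/25\), so \(\sqrt{z^2-1}=i\sqrt{26}/5\). This gives
\[
    \rho=\left|\frac{i}{5}+\frac{i\sqrt{26}}{5}\right|=\frac{1+\sqrt{26}}{5}=\frac15+\sqrt{1+\frac1{25}}=\rho_R .
\]
By symmetry the same value holds for \(-i/5\).

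Equivalently, I can use the semi-axes \(a=(\rho+\rho^{-1})/2\) and \(b=(\rho-\rho^{-1})/2\). The poles lie on the imaginary axis, and \(b=1/5\) solves to \(\rho=\rho_R\). Since \(b\) is increasing in \(\rho\), for \(\rho<\rho_R\) the closed ellipse \(\overline E_\rho\) has semi-minor axis \(b<1/5\). Hence it excludes both poles, and \(f_R\) is holomorphic on a neighbourhood of \(\overline E_\rho\). In particular \(f_R\) is continuous on \(\overline E_\rho\) and \(M_\rho\) is finite. It is worth noting that \(M_\rho\) blows up as \(\rho\to\rho_R\), which is why the statement only asserts existence of \(M_\rho\) for each fixed \(\rho\).

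With this verified, Corollary~\ref{cor:no-runge-analytic} applies under the assumed condition \(n\sigma_n(\log n+1)\le\eta\) and yields the stated inequality. Since \((1+C_\eta\log(en))\rho^{-n}\to0\) for any fixed \(\rho>1\), uniform convergence follows.

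The only step requiring care is the geometric computation of \(\rho_R\), together with checking that the poles lie strictly outside \(\overline E_\rho\). It is routine, so I do not expect a real obstacle here.
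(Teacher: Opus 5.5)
Your proposal is correct and follows the paper's proof: you check that the only singularities of \(f_R\), the poles \(\pm i/5\), lie on \(\partial E_{\rho_R}\), so \(f_R\) is analytic in \(E_\rho\) and continuous on \(\overline E_\rho\) for every \(1<\rho<\rho_R\), and then you invoke Corollary~\ref{cor:no-runge-analytic}. Your explicit computation of \(\rho_R\), either via \(|z+\sqrt{z^2-1}|\) or via the semi-minor axis \(b=(\rho-\rho^{-1})/2=1/5\), simply supplies the detail the paper leaves implicit.
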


\begin{proof}
The singularities of \(f_R\) are at \(\pm i/5\). Hence \(f_R\) is analytic in
\(E_\rho\) and continuous on \(\overline E_\rho\) for every
\(1<\rho<\rho_R\). The result is Corollary~\ref{cor:no-runge-analytic}.
\end{proof}

For equally spaced interpolation, the corresponding Lebesgue constants grow
exponentially; see \cite{Runge1901,PlatteTrefethenKuijlaars2011}. The preceding
corollaries isolate the different mechanism here: the factor \(1+\Lambda_n\)
is only logarithmic and cannot dominate the Bernstein decay \(\rho^{-n}\).

\section{Pseudospectral differentiation}
\label{sec:pseudospectral}

The preceding section shows that interpolation at the perturbed
Chebyshev--Lobatto points remains spectrally accurate for analytic functions
in the deterministic stability regime. We also record the corresponding
consistency consequence for pseudospectral differentiation. The underlying
mechanism is the same stability estimate in a differentiated form: a
pseudospectral derivative is obtained by differentiating the interpolation
polynomial, see, e.g., \cite{MR1776072}, and
Markov's inequality converts uniform stability of interpolation into stability
of any fixed number of derivatives, at the cost of a polynomial factor in
\(n\). This factor does not destroy the geometric decay supplied by analyticity
in a Bernstein ellipse. Error behaviour for differentiated Chebyshev spectral approximations is also
closely related to localization phenomena; see, e.g., \cite{Wang2023ErrorLocalization}.

Let
\[
    x_j=\cos\left(\frac{j\pi}{n}+\varepsilon_j\right),
    \qquad j=0,\ldots,n,
\]
and let \(I_n^\varepsilon\) denote the interpolation operator at these
perturbed nodes:
\[
    I_n^\varepsilon f\in \mathbb{P}_n,
    \qquad
    I_n^\varepsilon f(x_j)=f(x_j),
    \qquad j=0,\ldots,n.
\]
If \(\ell_j^\varepsilon\) are the corresponding Lagrange basis polynomials, then
\[
    I_n^\varepsilon f(x)
    =
    \sum_{j=0}^n f(x_j)\ell_j^\varepsilon(x),
\]
and the pseudospectral approximation of the \(m\)-th derivative is
\[
    (I_n^\varepsilon f)^{(m)}(x)
    =
    \sum_{j=0}^n f(x_j)(\ell_j^\varepsilon)^{(m)}(x).
\]
In particular, evaluating this expression at the nodes gives the usual
pseudospectral differentiation matrix
\[
    D_{ij}^{(m),\varepsilon}
    =
    (\ell_j^\varepsilon)^{(m)}(x_i),
    \qquad 0\le i,j\le n.
\]

\begin{theorem}[Perturbed pseudospectral differentiation]
\label{thm:pseudospectral-differentiation}
Assume that the nodes are distinct and that
\[
    \max_{0\le j\le n}|\varepsilon_j|\le \sigma_n .
\]
Suppose moreover that, for some fixed \(\eta\),
\[
    n\sigma_n(\log n+1)\le \eta,
    \qquad
    0<\eta<\left(1+\frac{2}{\pi}\right)^{-1}.
\]
Let \(f\) be analytic in the Bernstein ellipse \(E_\rho\), \(\rho>1\), and
continuous on \(\overline E_\rho\). Set
\[
    M_\rho=\max_{z\in\overline E_\rho}|f(z)|.
\]
Then, for every fixed integer \(m\ge1\), there exists a constant
\(C_{\rho,m,\eta}>0\), independent of \(n\), such that
\[
    \left\|
    f^{(m)}-(I_n^\varepsilon f)^{(m)}
    \right\|_\infty
    \le
    C_{\rho,m,\eta}
    M_\rho\, n^{2m}\log(en)\rho^{-n}.
\]
Consequently,
\[
    \max_{0\le i\le n}
    \left|
    f^{(m)}(x_i)
    -
    \sum_{j=0}^n f(x_j)(\ell_j^\varepsilon)^{(m)}(x_i)
    \right|
    \le
    C_{\rho,m,\eta}
    M_\rho\, n^{2m}\log(en)\rho^{-n}.
\]
In particular, for each fixed derivative order \(m\), pseudospectral
differentiation at the perturbed nodes is spectrally accurate.
\end{theorem}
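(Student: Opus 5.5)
The plan is to compare with a best approximation and move all derivatives onto polynomials. Let $p^*\in\mathbb P_n$ be a best uniform approximation to $f$ on $[-1,1]$, so that $\|f-p^*\|_\infty=E_n(f)\le \frac{2M_\rho}{\rho-1}\rho^{-n}$ by \eqref{equ:bernstein-best-approximation}. Interpolation reproduces polynomials, so $I_n^\varepsilon p^*=p^*$. This gives the splitting
\[
    f^{(m)}-(I_n^\varepsilon f)^{(m)}
    =
    \bigl(f-p^*\bigr)^{(m)}
    +
    \bigl(I_n^\varepsilon(p^*-f)\bigr)^{(m)} .
\]
The second term is the derivative of a polynomial of degree at most $n$. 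Iterating Markov's inequality $m$ times bounds it by
\[
    n^{2m}\|I_n^\varepsilon(p^*-f)\|_\infty \le n^{2m}\Lambda_n E_n(f).
\]
Theorem~\ref{thm:explicit-log-stability} then gives $\Lambda_n\le C_\eta\log(en)$, so this term is at most $n^{2m}C_\eta\log(en)\frac{2M_\rho}{\rho-1}\rho^{-n}$, which is of the required form.

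The first term, $(f-p^*)^{(m)}$, is the main obstacle. A best approximation gives no direct control of derivatives, so I would replace $p^*$ by a near-best polynomial whose derivatives are controlled. The natural choice is the truncated Chebyshev series $f_n=\sum_{k\le n}a_kT_k$. Analyticity in $E_\rho$ gives $|a_k|\le 2M_\rho\rho^{-k}$, and the standard bound $\|T_k^{(m)}\|_\infty=T_k^{(m)}(1)\le k^{2m}$ follows from Markov's inequality. Together these give
\[
    \|(f-f_n)^{(m)}\|_\infty\le \sum_{k>n}2M_\rho k^{2m}\rho^{-k}\le C_{\rho,m}M_\rho n^{2m}\rho^{-n}.
\]
The tail estimate holds because $k^{2m}\rho^{-k}$ is eventually dominated by a geometric series in $k-n$. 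Concretely, $(k/n)^{2m}\le (1+j)^{2m}$ with $j=k-n$, and $\sum_j(1+j)^{2m}\rho^{-j}<\infty$. In the same way $\|f-f_n\|_\infty\le \frac{2M_\rho}{\rho-1}\rho^{-n}$.

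So the proof should be run with $f_n$ in place of $p^*$ throughout. The second term becomes $n^{2m}\Lambda_n\|f-f_n\|_\infty$ and is bounded exactly as before. Adding the two bounds gives the constant
\[
    C_{\rho,m,\eta}=C_{\rho,m}+\frac{2C_\eta}{\rho-1}.
\]
The factor $n^{2m}\rho^{-n}$ is absorbed using $\log(en)\ge1$. The nodal statement follows by restricting the uniform bound to the points $x_i$. Spectral accuracy follows because $n^{2m}\log(en)\rho^{-n}\to0$ for each fixed $m$.
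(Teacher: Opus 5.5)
Your proposal is correct and follows the paper's proof step for step. The paper also uses the truncated Chebyshev expansion $p_n$ and the exactness $I_n^\varepsilon p_n=p_n$ to split the error, bounds the polynomial part by Markov's inequality times $\Lambda_n\le C_\eta\log(en)$ from Theorem~\ref{thm:explicit-log-stability}, and bounds $\|f^{(m)}-p_n^{(m)}\|_\infty\le C_{\rho,m}M_\rho n^{2m}\rho^{-n}$; the one difference is that the paper quotes this last estimate as standard, whereas you derive it from $|a_k|\le 2M_\rho\rho^{-k}$, $\|T_k^{(m)}\|_\infty\le k^{2m}$ and the tail sum, which is a welcome addition.
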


\begin{proof}
The case \(m=0\) is exactly the interpolation estimate proved in
Corollary~\ref{cor:no-runge-analytic}. We therefore assume \(m\ge1\).

Let \(p_n\in \mathbb{P}_n\) be a polynomial approximant to \(f\) satisfying the
standard Bernstein estimates
\[
    \|f-p_n\|_\infty
    \le
    C_\rho M_\rho \rho^{-n},
\]
and, for the fixed integer \(m\),
\[
    \|f^{(m)}-p_n^{(m)}\|_\infty
    \le
    C_{\rho,m}M_\rho n^{2m}\rho^{-n}.
\]
For example, \(p_n\) may be taken to be the truncated Chebyshev expansion of
\(f\).

Since \(p_n\in \mathbb{P}_n\), interpolation is exact on \(p_n\):
\[
    I_n^\varepsilon p_n=p_n.
\]
Therefore
\[
    f-I_n^\varepsilon f
    =
    (f-p_n)-I_n^\varepsilon(f-p_n).
\]
Differentiating \(m\) times gives
\[
    f^{(m)}-(I_n^\varepsilon f)^{(m)}
    =
    \bigl(f^{(m)}-p_n^{(m)}\bigr)
    -
    \bigl(I_n^\varepsilon(f-p_n)\bigr)^{(m)}.
\]
Hence
\[
    \left\|
    f^{(m)}-(I_n^\varepsilon f)^{(m)}
    \right\|_\infty
    \le
    \|f^{(m)}-p_n^{(m)}\|_\infty
    +
    \left\|
    \bigl(I_n^\varepsilon(f-p_n)\bigr)^{(m)}
    \right\|_\infty .
\]

The first term is already bounded by
\[
    C_{\rho,m}M_\rho n^{2m}\rho^{-n}.
\]
For the second term, \(I_n^\varepsilon(f-p_n)\) is a polynomial of degree at
most \(n\). By Markov's inequality, for every \(r\in \mathbb{P}_n\),
\[
    \|r^{(m)}\|_\infty
    \le
    C_m n^{2m}\|r\|_\infty .
\]
Applying this to $I_n^\varepsilon(f-p_n)$,
we obtain
\[
    \left\|
    \bigl(I_n^\varepsilon(f-p_n)\bigr)^{(m)}
    \right\|_\infty
    \le
    C_m n^{2m}
    \|I_n^\varepsilon(f-p_n)\|_\infty.
\]

The operator norm of the interpolation operator is the Lebesgue constant.
Thus
\[
    \|I_n^\varepsilon g\|_\infty
    \le
    \Lambda_n\|g\|_\infty .
\]
By Theorem~\ref{thm:explicit-log-stability},
$\Lambda_n\le C_\eta\log(en)$.
Therefore
\[
\begin{aligned}
    \|I_n^\varepsilon(f-p_n)\|_\infty
    &\le
    \Lambda_n\|f-p_n\|_\infty  \le
    C_\eta\log(en)\,
    C_\rho M_\rho\rho^{-n}.
\end{aligned}
\]
Consequently,
\[
    \left\|
    \bigl(I_n^\varepsilon(f-p_n)\bigr)^{(m)}
    \right\|_\infty
    \le
    C_{\rho,m,\eta}
    M_\rho n^{2m}\log(en)\rho^{-n}.
\]
Combining this estimate with the bound for
\(\|f^{(m)}-p_n^{(m)}\|_\infty\) gives
\[
    \left\|
    f^{(m)}-(I_n^\varepsilon f)^{(m)}
    \right\|_\infty
    \le
    C_{\rho,m,\eta}
    M_\rho n^{2m}\log(en)\rho^{-n}.
\]
This proves the continuous pseudospectral differentiation estimate.

Finally, evaluating the same estimate at the nodes \(x_i\) gives
\[
    \max_{0\le i\le n}
    \left|
    f^{(m)}(x_i)
    -
    (I_n^\varepsilon f)^{(m)}(x_i)
    \right|
    \le
    C_{\rho,m,\eta}
    M_\rho n^{2m}\log(en)\rho^{-n}.
\]
Since
\[
    (I_n^\varepsilon f)^{(m)}(x_i)
    =
    \sum_{j=0}^n f(x_j)(\ell_j^\varepsilon)^{(m)}(x_i),
\]
this is exactly the differentiation-matrix estimate.
\end{proof}

\begin{remark}
Theorem~\ref{thm:pseudospectral-differentiation} concerns pseudospectral
differentiation of a given analytic function. It should not be read as a
complete convergence theorem for a pseudospectral collocation method applied
to a differential equation. For a full collocation scheme, one also has to
control the stability or invertibility of the resulting discrete differential
operator. The result here is a consistency statement: in the deterministic
stability regime, the perturbed interpolation grid itself does not destroy
spectral accuracy of the differentiated interpolant.
\end{remark}

\section{Numerical experiments}
\label{sec:numerical}

We close with numerical experiments illustrating the preceding estimates.

\subsection{Lebesgue constants and Lebesgue functions}

We first examine the Lebesgue constants and Lebesgue functions of perturbed
Chebyshev--Lobatto points. The random perturbations are generated in the
angular variable, as in \eqref{equ:perturbedChebyshevLobatto}, with
\[
    \varepsilon_0=\varepsilon_n=0,
    \qquad
    \varepsilon_j\sim {\rm Unif}[-\sigma_n,\sigma_n],
    \qquad j=1,\ldots,n-1,
\]
for some scale $\sigma_n$ to be specified in the following text.
For each randomly generated node set, the nodes are sorted increasingly before
the Lebesgue function is evaluated.

For the numerical evaluation we use the barycentric formula \cite{BerrutTrefethen2004}, not as an
additional assumption but as a stable way of evaluating the same Lagrange
interpolants and Lebesgue functions defined in Section~\ref{sec:interpolation}. Thus the Lagrange basis polynomials need not be formed
explicitly. If 
\[w_j=\prod_{k\ne j}(x_j-x_k)^{-1},\] 
then away from the nodes
the Lebesgue function is computed as
\[
    \lambda_n(x)
    =\sum_{j=0}^n|\ell_j(x)|
    =
    \frac{\displaystyle \sum_{j=0}^n
    \left|w_j/(x-x_j)\right|}
    {\displaystyle \left|\sum_{j=0}^n w_j/(x-x_j)\right|}.
\]
At the interpolation nodes we set \(\lambda_n(x_j)=1\). To avoid overflow in
the weights, the magnitudes are formed logarithmically,
\[
    \log |w_j|
    =
    -\sum_{k\ne j}\log |x_j-x_k|,
\]
and then shifted by their maximum before exponentiation. The signs alternate
after sorting the nodes. The Lebesgue constant is then approximated by
maximizing \(\lambda_n\) on a dense mixed grid consisting of a uniform grid in
\(x\) and a uniform grid in the angular variable \(x=\cos\theta\). For the
Lebesgue-function plots below, additional endpoint-refined grid points are
included in order to resolve the large peaks of the equally spaced nodes near
\(\pm1\).

\begin{figure}[htbp]
  \centering
  \includegraphics[width=0.72\textwidth]{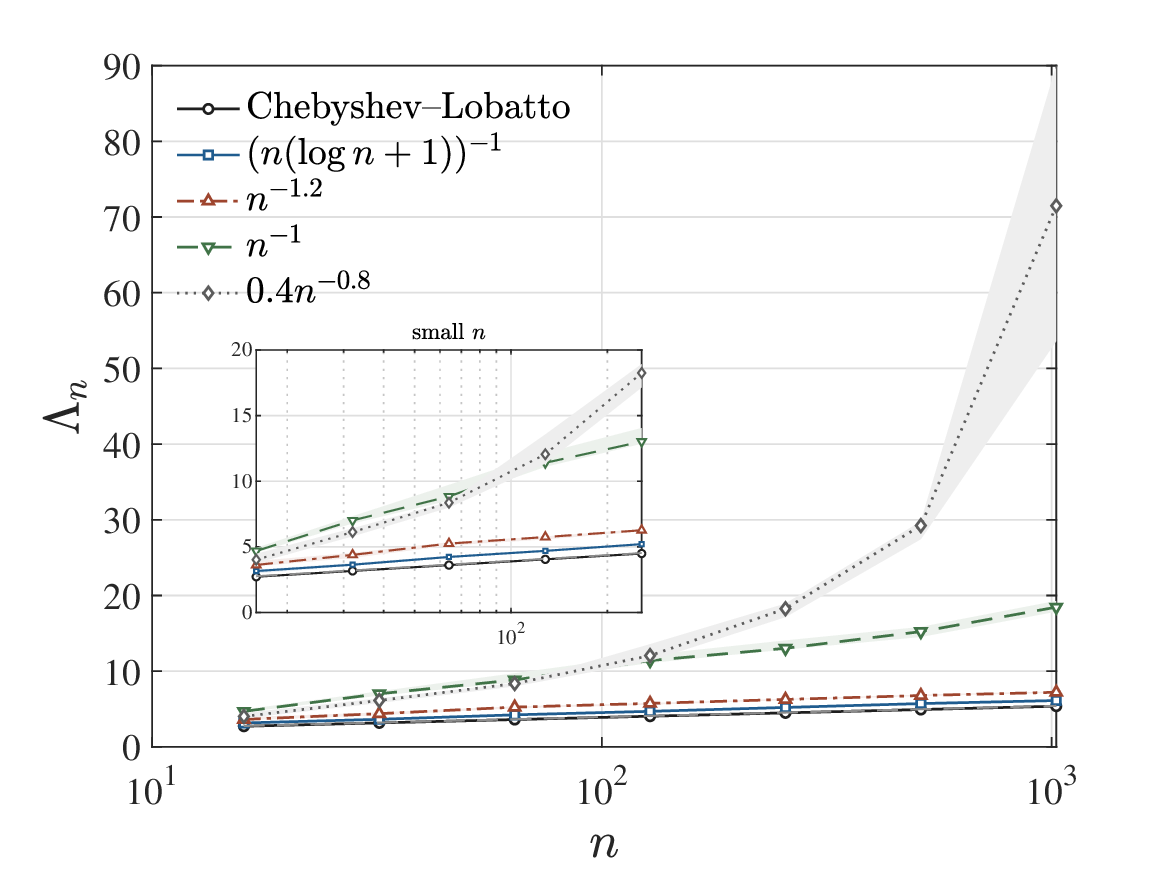}
  \caption{Lebesgue constants for perturbed Chebyshev--Lobatto points as
  functions of \(n\). The perturbation scales are
  \(\sigma_n=(n(\log n+1))^{-1}\), \(n^{-1.2}\), \(n^{-1}\), and
  \(0.4n^{-0.8}\). Random curves show medians over eight independent
  perturbations; shaded bands indicate the middle half of the samples.}
  \label{fig:lebesgue-vs-n}
\end{figure}

Figure~\ref{fig:lebesgue-vs-n} compares perturbations at the scale
\((n(\log n+1))^{-1}\), below it, at the angular mesh scale \(n^{-1}\), and
above the mesh scale. The theorem gives logarithmic stability when
\(n\sigma_n(\log n+1)\) is bounded by a sufficiently small constant. The plotted
curve with \(\sigma_n=(n(\log n+1))^{-1}\) has constant one in this scale, so it
is not covered by the explicit small-constant theorem, but it follows the
unperturbed Chebyshev--Lobatto curve closely in this experiment. The slightly
smaller scale \(n^{-1.2}\) also behaves in the same way. At the mesh scale
\(n^{-1}\), the growth is still moderate but visibly larger. By contrast, the larger perturbation
\(0.4n^{-0.8}\) produces much larger Lebesgue constants over the computed range.
This is the numerical counterpart of the analytical message: sufficiently small
angular perturbations preserve the Chebyshev mechanism, while larger
perturbations can create substantial amplification.

\begin{figure}[htbp]
  \centering
  \includegraphics[width=\textwidth]{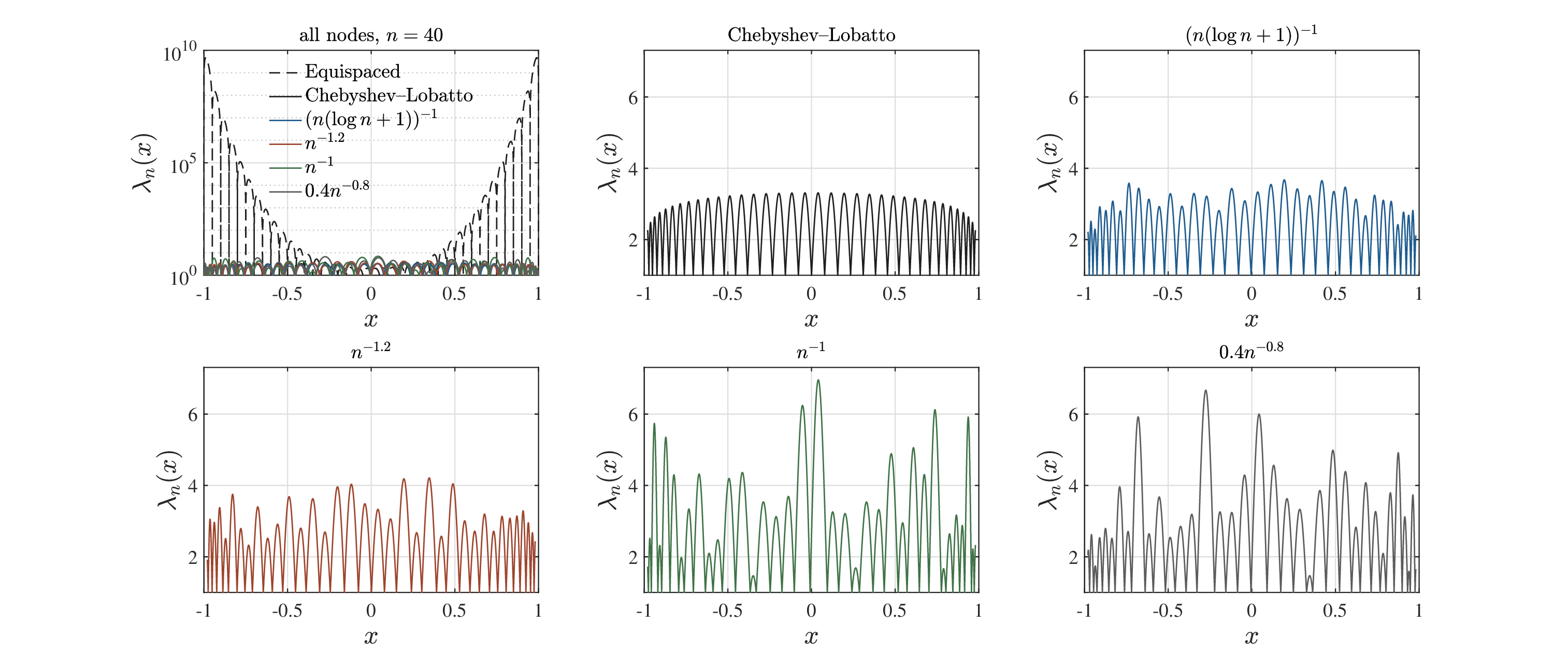}
  \caption{Lebesgue functions for selected node sets at fixed \(n\). The first
  panel compares equally spaced nodes, Chebyshev--Lobatto nodes, and four
  perturbed Chebyshev--Lobatto node sets. The equally spaced curve is shown
  with a dashed line. The remaining panels show the Chebyshev--Lobatto case and
  the four perturbation scales separately, with a common vertical scale.}
  \label{fig:lebesguefunction}
\end{figure}

Figure~\ref{fig:lebesguefunction} shows where the amplification measured by
the Lebesgue constant occurs. For equally spaced nodes, the Lebesgue function
develops large endpoint peaks, which is the familiar mechanism behind the
instability of equally spaced interpolation. In contrast, the
Chebyshev--Lobatto Lebesgue function is much flatter and remains on a
logarithmic scale. The perturbations at and below the logarithmic stability
scale have nearly the same profile as the unperturbed Chebyshev--Lobatto
points. The larger \(0.4n^{-0.8}\) perturbation
produces more pronounced peaks, consistent with the larger values of
\(\Lambda_n\) in Figure~\ref{fig:lebesgue-vs-n}. Thus the function-level plots
support the same conclusion as the constants: the issue is not merely the size
of the largest value, but the emergence of localized peaks in the Lebesgue
function as the angular perturbations become larger.

\subsection{Interpolants of the Runge function}

The next experiment returns to the classical Runge function
\[
    f(x)=\frac{1}{1+25x^2}.
\]
For each degree \(n\), we compare its polynomial interpolants on equally
spaced nodes, on the Chebyshev--Lobatto nodes, and on one random perturbation
of the Chebyshev--Lobatto nodes at the logarithmic scale. The perturbed nodes
are again formed in the angular variable,
\[
    x_j^\varepsilon
    =
    \cos\left(\frac{j\pi}{n}+\varepsilon_j\right),
    \qquad
    \varepsilon_0=\varepsilon_n=0,
    \qquad
    \varepsilon_j\sim
    {\rm Unif}\left[-\frac{1}{n(\log n+1)},\frac{1}{n(\log n+1)}\right].
\]
Thus the perturbation has the same asymptotic order as the deterministic
scale, but with constant one; the explicit theorem requires a sufficiently
small constant in this scale. The interpolants are evaluated on a dense uniform
plotting grid using the barycentric formula with the same
logarithmically scaled weights as above. For the equally spaced case, the
usual explicit binomial form of the barycentric weights is used.

\begin{figure}[htbp]
  \centering
  \includegraphics[width=\textwidth]{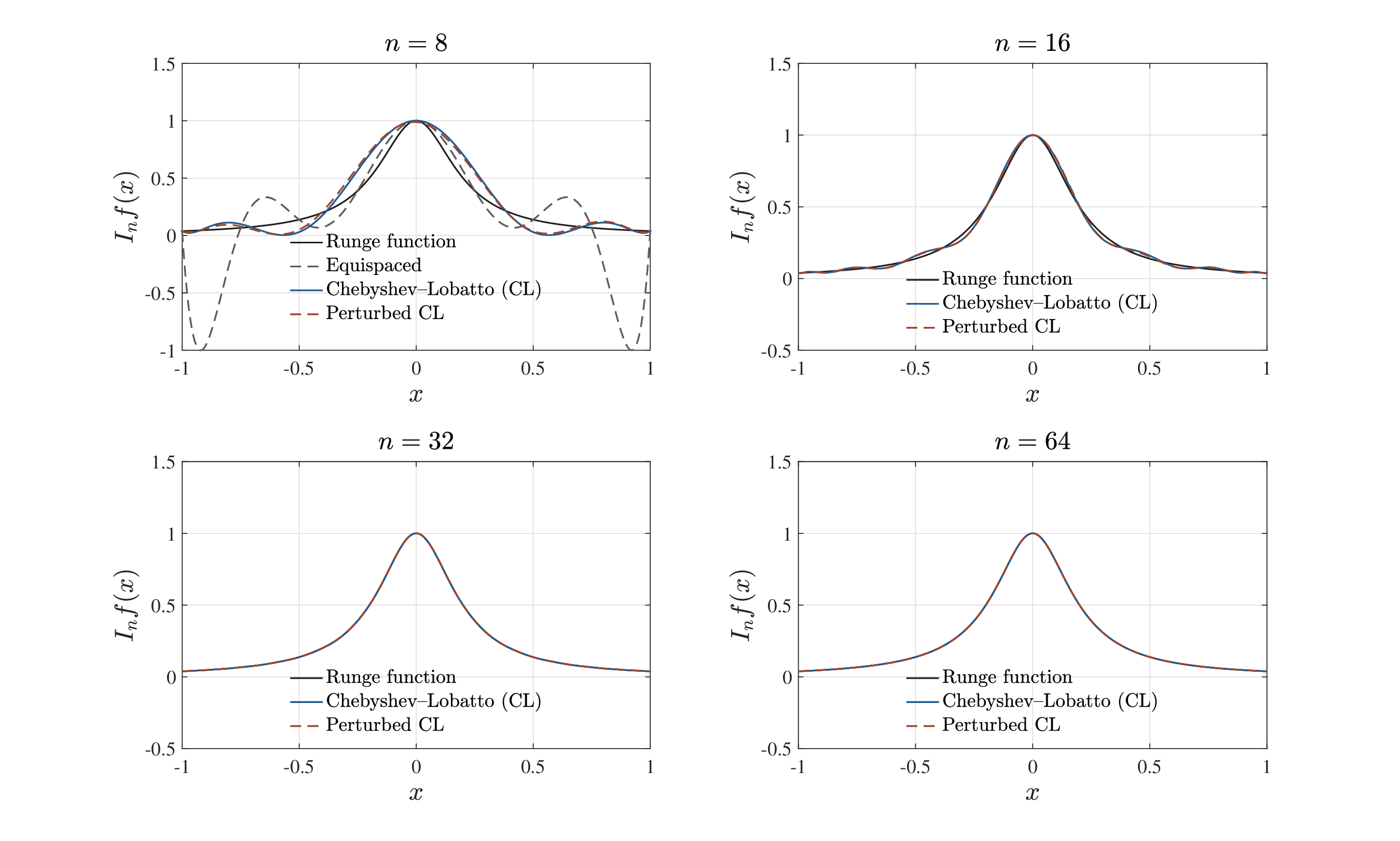}
  \caption{Interpolants of the Runge function
  \(f(x)=1/(1+25x^2)\). For the smallest degree shown, the equally spaced
  interpolant is included to display the onset of the classical endpoint
  oscillation. For the larger degrees, the comparison focuses on the
  Chebyshev--Lobatto interpolant and an angular perturbation with
  \(\sigma_n=(n(\log n+1))^{-1}\).}
  \label{fig:runge-perturbed-cl}
\end{figure}

Figure~\ref{fig:runge-perturbed-cl} connects the Lebesgue-constant behaviour
with the shape of the interpolating polynomials themselves. Equally spaced
interpolation is vulnerable to the large endpoint peaks already visible in
the Lebesgue function: even for the Runge function, whose singularities are
fixed away from the real interval, the polynomial may develop large endpoint
oscillations as \(n\) increases. The Chebyshev--Lobatto interpolants avoid
this mechanism by placing many nodes near the endpoints. The stable perturbed
Chebyshev--Lobatto interpolants are visually indistinguishable from the
unperturbed ones at this scale, apart from small random changes in the curve.
This is consistent with Corollary~\ref{cor:no-runge-analytic}: the
Bernstein-ellipse approximation error decays geometrically, and a
logarithmically growing Lebesgue constant cannot overturn that decay. In this
analytic setting, perturbations at this logarithmic scale therefore do
not exhibit Runge-type divergence.

\subsection{Differentiation errors}

The last experiment illustrates the pseudospectral consequence discussed in
Section~\ref{sec:pseudospectral}. We take
\[
    f(x)=e^x\sin(3x),
\]
so that \(f\), \(f'\), and \(f''\) are known explicitly. For the node set
\(\mathcal X_n^\varepsilon=\{x_j^\varepsilon\}_{j=0}^n\), we compute the
interpolant \(I_n^\varepsilon f\), the first differentiation matrix
\[
    D_{ij}^{(1),\varepsilon}
    =
    \frac{w_j}{w_i(x_i-x_j)},\qquad i\ne j,
    \qquad
    D_{ii}^{(1),\varepsilon}
    =
    -\sum_{j\ne i}D_{ij}^{(1),\varepsilon},
\]
where \(w_j\) are the barycentric weights, and set
\[
    D^{(2),\varepsilon}=(D^{(1),\varepsilon})^2.
\]
Table~\ref{tab:pseudospectral-errors} reports the errors for \(m=0,1,2\). The
interpolation error is evaluated on a dense uniform grid, while the derivative
errors are evaluated at the interpolation nodes:
\[
    \|f-I_n^\varepsilon f\|_\infty,\qquad
    \max_{x_i\in\mathcal X_n^\varepsilon}
    |f'(x_i)-(I_n^\varepsilon f)'(x_i)|,
\]
and
\[
    \max_{x_i\in\mathcal X_n^\varepsilon}
    |f''(x_i)-(I_n^\varepsilon f)''(x_i)|.
\]
The random perturbations are again angular perturbations with
\[
    \varepsilon_j\sim {\rm Unif}[-\sigma_n,\sigma_n], 
    \qquad
    \varepsilon_0=\varepsilon_n=0,
\]
using the same four scales as in Figure~\ref{fig:lebesgue-vs-n}. The random
entries in the table are medians over eight independent perturbations; the
Chebyshev--Lobatto row is deterministic.

\begin{table}[htbp]
\centering
\small
\caption{Interpolation and pseudospectral differentiation errors for
\(f(x)=e^x\sin(3x)\). The columns \(m=0,1,2\) correspond respectively to
\(\|f-I_n^\varepsilon f\|_\infty\), the first derivative error at the nodes, and
the second derivative error at the nodes. Random entries are medians over eight
independent angular perturbations.}
\label{tab:pseudospectral-errors}
\begin{tabular}{r l c c c}
\toprule
$n$ & nodes & $m=0$ & $m=1$ & $m=2$ \\
\midrule
32 & Chebyshev--Lobatto & $1.89\times 10^{-15}$ & $1.45\times 10^{-13}$ & $4.14\times 10^{-11}$ \\
 & $(n(\log n+1))^{-1}$ & $1.78\times 10^{-15}$ & $1.22\times 10^{-13}$ & $3.04\times 10^{-11}$ \\
 & $n^{-1.2}$ & $2.33\times 10^{-15}$ & $1.95\times 10^{-13}$ & $2.89\times 10^{-11}$ \\
 & $n^{-1}$ & $2.90\times 10^{-15}$ & $2.10\times 10^{-13}$ & $8.76\times 10^{-11}$ \\
 & $0.4n^{-0.8}$ & $2.64\times 10^{-15}$ & $1.49\times 10^{-13}$ & $8.04\times 10^{-11}$ \\
 \midrule
64 & Chebyshev--Lobatto & $4.44\times 10^{-15}$ & $1.51\times 10^{-12}$ & $2.64\times 10^{-9}$ \\
 & $(n(\log n+1))^{-1}$ & $3.89\times 10^{-15}$ & $8.18\times 10^{-13}$ & $1.16\times 10^{-9}$ \\
 & $n^{-1.2}$ & $4.94\times 10^{-15}$ & $5.61\times 10^{-13}$ & $1.07\times 10^{-9}$ \\
 & $n^{-1}$ & $9.77\times 10^{-15}$ & $7.13\times 10^{-13}$ & $1.31\times 10^{-9}$ \\
 & $0.4n^{-0.8}$ & $8.02\times 10^{-15}$ & $6.45\times 10^{-13}$ & $6.59\times 10^{-10}$ \\
\midrule
128 & Chebyshev--Lobatto & $7.11\times 10^{-15}$ & $1.46\times 10^{-12}$ & $1.00\times 10^{-8}$ \\
 & $(n(\log n+1))^{-1}$ & $6.72\times 10^{-15}$ & $1.81\times 10^{-12}$ & $1.16\times 10^{-8}$ \\
 & $n^{-1.2}$ & $6.99\times 10^{-15}$ & $2.15\times 10^{-12}$ & $1.49\times 10^{-8}$ \\
 & $n^{-1}$ & $1.98\times 10^{-14}$ & $2.91\times 10^{-12}$ & $2.03\times 10^{-8}$ \\
 & $0.4n^{-0.8}$ & $2.38\times 10^{-14}$ & $3.53\times 10^{-12}$ & $3.16\times 10^{-8}$ \\
\bottomrule
\end{tabular}
\end{table}

Table~\ref{tab:pseudospectral-errors} is intended as a finite-precision
illustration of Theorem~\ref{thm:pseudospectral-differentiation}. Passing from
interpolation to one or two derivatives introduces the expected polynomial
loss in \(n\), so the derivative errors are larger than the interpolation
errors and eventually reflect roundoff and differentiation conditioning. The
perturbation scale \(\sigma_n=(n(\log n+1))^{-1}\), which has the same
asymptotic order as the deterministic stability scale, remains close to the
unperturbed Chebyshev--Lobatto behaviour, while the larger perturbation
\(0.4n^{-0.8}\) gives somewhat larger and less regular errors, consistent with
the Lebesgue-constant behaviour in Figure~\ref{fig:lebesgue-vs-n}.

It is not significant if, for some values of \(n\), a stable perturbed grid
slightly outperforms the unperturbed Chebyshev--Lobatto grid. The theorem is a
stability result, not an optimality statement for every fixed function and
every finite degree. Chebyshev--Lobatto points give a near-minimal
interpolation amplification mechanism in a uniform sense, but the actual error
for a particular analytic function also depends on the alignment of the nodes
with the oscillation and endpoint behaviour of that function. A small random
angular perturbation may therefore improve the finite-\(n\) constant, even
though the asymptotic mechanism and the order of stability remain the same.
The relevant comparison is the envelope: stable perturbations stay close to
the Chebyshev--Lobatto behaviour, whereas perturbations beyond the stable
scale can produce noticeably larger and less predictable errors.

\section{Concluding remarks}
\label{sec:concluding-remarks}

The results of this paper point to a simple organizing principle: for
Chebyshev--Lobatto interpolation, perturbation stability is most naturally
measured in the angular variable.  In that variable the Chebyshev--Lobatto
points form a uniform grid, and the trigonometric Bernstein inequality gives
the factor \(n\) that drives the estimate.  This is why the scale
\(n\sigma_n(\log n+1)\lesssim 1\), with a sufficiently small constant, appears
in the deterministic theorem.  Once the Lebesgue constant remains logarithmic,
analytic interpolation is governed by the balance between the logarithmic
amplification of the interpolation
operator and the geometric decay supplied by Bernstein ellipses.  The numerical
experiments suggest that typical random perturbations may remain stable beyond
the present worst-case scale, but identifying such a probabilistic threshold is
a different problem from the deterministic estimate proved here.

There are several natural directions in which this angular viewpoint might be
tested.  For Legendre points, the error behaviour of Chebyshev and Legendre
approximation has been studied in detail
\cite{Wang2018LegendreExpansion,Wang2023LegendreApproximation,WangXiang2012,XiangChenWang2010}, but perturbation stability should require a
geometry adapted to the Legendre distribution rather than the cosine
parametrization used here.  In two dimensions, Padua points provide an analogue
of Chebyshev-like interpolation on the square, with near-optimal Lebesgue
growth and a structure tied to Lissajous curves
\cite{BosCaliariDeMarchiVianelloXu2006Padua,BosDeMarchiVianelloXu2007Padua}.
Understanding perturbations of this two-dimensional angular structure is a
natural open problem.

Finally, the same question can be asked for related approximation operators.
For interpolatory quadrature on perturbed Chebyshev--Lobatto points, recomputed
weights retain degree-\(n\) exactness and inherit error control from
interpolation, whereas using unperturbed Clenshaw--Curtis weights at perturbed
nodes generally destroys polynomial exactness.  For hyperinterpolation
\cite{an2025path,sloan1995polynomial} and quadrature-based Galerkin schemes
\cite{Wu2026MZGalerkin}, the analogue of the Lebesgue constant is a discrete
norm or Marcinkiewicz--Zygmund stability condition.  These examples point to
the same broader issue: the useful notion of perturbation is dictated by the
sampling geometry and by the operator norm that controls the approximation
process.

\bibliographystyle{siamplain}
\bibliography{myref}

\end{document}